\theoremstyle{plain}
\newtheorem{theorem}{Theorem}
\newtheorem{lemma}{Lemma}
\newtheorem{proposition}{Proposition}
\theoremstyle{definition}
\newtheorem{definition}{Definition}
\newtheorem{remark}{Remark}
\newcommand{\C}{\mathbb{C}}
\title
[Strongly projectively flat maps]{Equivariant strongly projectively flat maps of compact homogeneous K\"ahler manifolds}
\author{Isami Koga}
\address{Graduate school of Mathematics,\\
Kyushu university,\\
744 Motooka, Nishi-ku, Fukuoka, 819-0395, Japan.}
\email{i-koga@math.kyushu-u.ac.jp}
\begin{document}
\maketitle
\begin{abstract}
	In \cite{Koga1}, the author define projectively flat maps of a compact K\"{a}hler manifold into complex 
	Grassmannian manifold.
	In this article, by focusing on the essence of the result in \cite{Koga1} we define {\it strongly projectively flat maps} and study such maps.
	Finally we prove a rigidity of equivariant strongly projectively flat maps of simply connected 
	homogeneous K\"{a}hler manifold.
\end{abstract}

\section{Introduction}
	Holomorphic maps into the complex projective space have been studied for a long time.
	In \cite{Calabi1} E. Calabi have proved that holomorphic isometric immersions of K\"ahler manifolds into the complex 
	projective space are rigid and equivariant with respect to the group of automorphisms of the domain.
	In \cite{Takeuchi} M. Takeuchi has constructed all holomorphic isometric immersions of homogeneous K\"ahler 
	manifolds into the complex projective space and has classified the holomorphic isometric immersions of Hermitian 
	symmetric spaces.

	In this article, we study holomorphic maps of a compact homogeneous K\"ahler manifold into the complex Grassmannian 
	manifold.

	Let $\C^n$ be an $n$-dimensional complex vector space with a Hermitian inner product $(\cdot,\cdot)_n$ and 
	$Gr_p(\C^n)$ the complex Grassmannian manifold of complex $p$-planes in $\C^n$ with the Hermitian metric of 
	Fubini-Study 
	type induced by $(\cdot,\cdot)_n$.
	Let $M$ be a compact K\"ahler manifold and $f:M\longrightarrow Gr_p(\C^n)$ be a holomorphic map.
	Then we have a holomorphic vector bundle $f^*Q\rightarrow M$ over $M$ which is the pull-back bundle of the universal 
	quotient bundle $Q\rightarrow Gr_p(\C^n)$ over $M$ by $f$.
	
	\begin{definition}[cf.\cite{Koga1}]
	\label{def1}
	\begin{enumerate}
		\item[]
		\item[(1)]A holomorphic map $f:M\longrightarrow Gr_p(\C^n)$ is called {\it projectively flat} if the pull-back 
		bundle $f^*Q\rightarrow M$ is projectively flat.	
		\item[(2)]A holomorphic map $f:M\longrightarrow Gr_p(\C^n)$ is called {\it strongly projectively flat} if there 
		exists a holomorphic Hermitian line bundle $L\rightarrow M$ such that $f^*Q\rightarrow M$ is isomorphic to $\tilde{L}\rightarrow M$ with holomorphic structures and fiber metrics, where $\tilde{L}\rightarrow M$ is orthogonal direct sum of $q$-copies of $L\rightarrow M$.
	\end{enumerate}
	\end{definition}

	Strongly projectively flat condition is a kind of simple extension of a map into the complex projective space with Fubini-Study metric. (For a detail, see the section 3).
	
	Projectively flat condition is defined by the author in \cite{K1}.
	A strongly projectively flat map is projectively flat since the orthogonal direct sum of $q$-copies of a holomorphic 
	line bundle is projectively flat.
	In general, the inverse of this asseertion is not true.
	However, we can show the following assertion.
	
	\begin{proposition}
	\label{prop1}
		Assume that a holomorphic map $f:M\rightarrow Gr_p(\C^n)$ is an isometric.
		Then $f$ is strongly projectively flat if and only if $f$ is projectively flat.
	\end{proposition}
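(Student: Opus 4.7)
The backward implication has already been observed in the paragraph preceding the statement, so I concentrate on the forward implication: assuming $f:M\to Gr_p(\C^n)$ is both holomorphic--isometric and projectively flat, the goal is to build a holomorphic Hermitian line bundle $L\to M$ and a holomorphic Hermitian isomorphism $f^{*}Q\cong\tilde L=L^{\oplus q}$ with $q=n-p$.

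The first step is to translate both hypotheses into curvature data on $E:=f^{*}Q$ with the pulled--back Hermitian metric $h$. Projective flatness says $F_{E}=\alpha\otimes\mathrm{id}_{E}$ for some closed real $(1,1)$--form $\alpha$ on $M$; tracing gives $\mathrm{tr}\,F_{E}=q\alpha$, so $\alpha=\tfrac1q\mathrm{tr}\,F_{E}$ is, up to the usual $2\pi i$--normalization, the first Chern form of $(E,h)$ divided by $q$. Isometricity, combined with the standard identity $c_{1}(Q,h_{Q})=\tfrac{1}{2\pi}\omega_{Gr}$ on the Grassmannian, forces $c_{1}(E,h)=\tfrac{1}{2\pi}\omega_{M}$. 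Hence $\alpha$ is completely pinned down as a fixed scalar multiple of $\omega_{M}$, and in particular is closed with integral cohomology class.

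The second step is to produce $L$. Since $F_{E}=\alpha\otimes\mathrm{id}_{E}$, the Chern curvature induced on $\mathrm{End}(E)$ vanishes, so $\mathrm{End}(E)$ is a flat Hermitian bundle. I would then single out $L$ as the Hermitian holomorphic line bundle whose Chern curvature is exactly $\alpha$: either as a $q$-th root of the determinant $\det E = f^{*}\det Q$ (whose curvature is $q\alpha$) when such a root exists, or, more robustly, as a holomorphic line sub-bundle of $E$ extracted from the flat structure on $\mathrm{End}(E)$ together with the $n$ canonical global sections of $E$ coming from the universal surjection $\underline{\C^{n}}\twoheadrightarrow Q$. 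With such an $L$ in hand, the twisted bundle $E\otimes L^{-1}$ has trivial curvature and is flat Hermitian. Finally, using the $n$ tautological sections of $E$ and the isometric relation they satisfy (a direct translation of $f^{*}\omega_{Gr}=\omega_{M}$ at the level of the matrix $(h(f^{*}\sigma_{i},f^{*}\sigma_{j}))_{i,j}$), I would produce a global holomorphic orthonormal frame of $E\otimes L^{-1}$, yielding $E\otimes L^{-1}\cong\underline{\C^{q}}_{M}$; tensoring by $L$ gives the desired $E\cong L^{\oplus q}$ as holomorphic Hermitian bundles.

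The main obstacle is the last step: upgrading the flat Hermitian structure on $E\otimes L^{-1}$ to an honest holomorphic Hermitian triviality. Flatness alone only provides a unitary representation $\pi_{1}(M)\to U(q)$, and what must be shown is that this representation is trivial. This is exactly where the isometric hypothesis must enter in full strength, through the fact that the $n$ canonical global sections $f^{*}\sigma_{1},\ldots,f^{*}\sigma_{n}$ of $E$ have a Hermitian profile completely prescribed by the Fubini--Study--type metric on the quotient $\C^{n}/f(x)$; verifying that this prescribed profile is incompatible with nontrivial monodromy is where I expect the real work, and where the subtlety distinguishing ``projectively flat'' from ``strongly projectively flat'' ultimately dissolves under the isometric assumption.
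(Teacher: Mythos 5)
Your opening step is essentially correct and in fact reproduces the curvature identity that the paper simply imports from \cite{Koga1}: projective flatness gives $R^{f^*Q}=\alpha\otimes\mathrm{Id}$, and tracing this together with the isometric hypothesis (pulling back the first Chern form of $Q$, which is proportional to $\omega_{Gr}$) pins $\alpha$ down as a fixed multiple of $\omega_M$; the paper records this as $R^{f^*Q}=-\tfrac{\sqrt{-1}}{q}\,\omega_M\,\mathrm{Id}$. The gap is everything after that. You never actually produce the line bundle $L$: a $q$-th root of $\det f^*Q$ need not exist, since $\tfrac1q c_1(f^*Q)$ is not a priori an integral class, and the fallback of ``extracting $L$ from the flat structure on $\mathrm{End}(E)$ together with the canonical sections'' is a hope, not an argument. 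More importantly, you yourself flag that the decisive step --- showing that the flat Hermitian bundle $E\otimes L^{-1}$ has trivial monodromy --- is ``where the real work'' lies, and you supply no mechanism for it; note that this triviality is equivalent to the conclusion being proved, so asserting that the difficulty ``dissolves under the isometric assumption'' restates the claim rather than establishing it. As it stands the forward implication is not proved.

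The paper closes exactly this gap by a holonomy argument that your proposal never invokes. Since the scalar form in the curvature is the K\"ahler form $\omega_M$, it is parallel; hence the curvature of the Chern connection on $f^*Q$ is a parallel $2$-form times the identity, and by the holonomy theorem the holonomy reduces to the diagonal $U(1)\subset U(q)$. The reduction itself furnishes a Hermitian line bundle $L\to M$ (so no separate integrality or root-extraction issue arises) together with a parallel, metric-preserving isomorphism $f^*Q\cong L\oplus\cdots\oplus L$; compatibility of holomorphic structures then comes for free because the connections involved are Chern connections. So the missing ingredient in your attempt is precisely the use of parallelism of $\omega_M$ plus holonomy reduction, which replaces both your construction of $L$ and your unresolved monodromy step.
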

	\begin{proof}
		For a detail, see \cite{Koga1}.
		Let $M$ be a compact K\"ahler manifold and $f:M\longrightarrow Gr_p(\C^n)$ be a holomorphic isometric 
		projectively flat immersion.
		It follows from the result in \cite{Koga1} that the curvature $R^{f^*Q}$ of pull-back bundle $f^*Q\rightarrow M$ 
		is expressed as
			\begin{equation*}
			R^{f^*Q} = -\cfrac{\sqrt{-1}}{q}\omega_M{\rm Id}_{Q_x},
			\end{equation*}
		where $q$ is the rank of $Q\rightarrow M$, $\omega_M$ is the K\"ahler form on $M$ and ${\rm Id}_{Q_x}$ is the 
		identity map of the fiber of $f^*Q\rightarrow M$ at $x\in M$.
		Since $\omega_M$ is parallel, it follows from the holonomy theorem that there exists a Hermitian line bundle $L
		\rightarrow M$ such that $f^*Q\rightarrow M$ is isomorphic to the orthogonal direct sum of $q$-copies of $L
		\rightarrow M$ as a Hermitian vector bundle.
		
		Therefore $f:M\longrightarrow Gr_p(\C^n)$ is strongly projectively flat.
	\end{proof}

	In the present paper, our goal is to show the following theorem.
	
	\begin{theorem}
		Let $M=G/K$ be a compact simply connected K\"ahler manifold such that $G$ is the isometry group of $M$ and $K$ an isotropy subgrouop of $G$.
		Let $f:M\longrightarrow Gr_p(\C^n)$ be a full holomorphic strongly projectively flat map into the complex 
		Grassmannian manifold.
		If $f$ is $G$-equivariant, then there exists an $N$-dimensional complex vector space $W$ and a holomorphic map 
		$f_0:M\longrightarrow Gr_{N-1}(W)$ such that $\C^n$ is regarded as the orthogonal direct sum of $q$-copies of $W
		$ and $f$ is congruent to the following composed map:
		\begin{align}
			\label{01}
			\begin{split}
				f:& M\longrightarrow Gr_{N-1}(W)\times\cdots\times Gr_{N-1}(W)\longrightarrow Gr_{q(N-1)}(\C^n),
				\\
				&x\longmapsto (f_0(x),\cdots,f_0(x))\longmapsto f_0(x)\oplus\cdots\oplus f_0(x).
			\end{split}
		\end{align}
	\end{theorem}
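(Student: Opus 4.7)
The plan is to combine Borel--Weil theory, Schur's lemma, and a fibrewise surjectivity argument. Since $M = G/K$ is a compact simply connected homogeneous K\"ahler manifold, it is a generalised flag manifold; every $G$-equivariant holomorphic Hermitian vector bundle on $M$ is determined by the $K$-representation on the fibre over $eK$, and for a $G$-equivariant holomorphic line bundle $L \to M$ the space $H^0(M,L)$ is either zero or an irreducible $G$-representation.

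The first step is to make everything $G$-equivariant. The hypothesis gives a unitary representation $\rho: G \to U(n)$ for which the canonical surjection $M \times \C^n \twoheadrightarrow f^*Q$ is $G$-equivariant. By strong projective flatness, $f^*Q \cong L^{\oplus q}$ for some holomorphic Hermitian line bundle $L$; since $\det f^*Q \cong L^{\otimes q}$ is $G$-equivariant and $M$ is simply connected, $L$ itself admits a $G$-equivariant structure with some character $\chi$ of $K$ on the fibre at $eK$. Comparing fibres at $eK$, the $K$-representation underlying $f^*Q$ must agree with $\chi^{\oplus q}$, so the $G$-structure on $L^{\oplus q}$ transported from $f^*Q$ differs from the diagonal one by a representation $\rho': G \to U(q)$ with $\rho'|_K$ trivial. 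Because $K$ contains a maximal torus of $G$, $\rho'$ itself is trivial, and we may assume the isomorphism $f^*Q \cong L^{\oplus q}$ is $G$-equivariant with trivial $G$-action on the $\C^q$ factor.

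Set $V := H^0(M,L)$, an irreducible $G$-representation of dimension $N$ by Borel--Weil. The surjection $M \times \C^n \twoheadrightarrow L^{\oplus q}$ factors through evaluation and yields a $G$-equivariant linear map $\tilde\phi: \C^n \to V^{\oplus q}$, injective because $f$ is full. Since $V$ is irreducible, $V^{\oplus q}$ is $V$-isotypic, so $\tilde\phi(\C^n)$ must be of the form $V^{\oplus k}$ for some $k \leq q$; thus $\C^n \cong V^{\oplus k}$ and $\tilde\phi$ is parametrised by an injective $q \times k$ matrix $A$. The fibrewise surjectivity at $x \in M$ decomposes as $(\mathrm{ev}_x)^{\oplus k}: V^{\oplus k} \twoheadrightarrow L_x^{\oplus k}$ followed by $A: L_x^{\oplus k} \to L_x^{\oplus q}$, and since the first is surjective the second must be too; this forces $\mathrm{rk}(A) = q$, hence $k = q$ and $A$ is invertible. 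By Schur's lemma and the compactness of $G$, one may choose $A$ unitary, so that $\C^n \cong V^{\oplus q}$ as an orthogonal direct sum.

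Finally, setting $W := V$ and defining $f_0: M \to Gr_{N-1}(W)$ by $f_0(x) = \ker(\mathrm{ev}_x: V \to L_x)$, we obtain $f(x) = \ker(\C^n \to L_x^{\oplus q}) = \ker(\mathrm{ev}_x)^{\oplus q} = f_0(x) \oplus \cdots \oplus f_0(x)$, which is the required composition. I expect the main obstacle to be the first step, namely showing that the holomorphic Hermitian isomorphism $f^*Q \cong L^{\oplus q}$ forces the fibre $K$-representation of $f^*Q$ to be $\chi^{\oplus q}$ and hence $\rho'$ to be trivial: the given isomorphism is not a priori $G$-equivariant, and leveraging the flag-manifold structure of $M = G/K$ carefully will be essential to pin down the fibre $K$-representation and then pass from ``trivial on $K$'' to ``trivial on $G$''.
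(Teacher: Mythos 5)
Your overall strategy runs parallel to the paper's, but it stalls exactly where you predict: the assertion that the fibre $K$-representation of $f^*Q$ at $[e]$ is $\chi^{\oplus q}$, equivalently that the isomorphism $f^*Q\cong \tilde{L}=L\oplus\cdots\oplus L$ can be taken $G$-equivariant, is not proved, and everything downstream depends on it. Concretely, from the data you allow yourself one cannot exclude the possibility that, as a \emph{homogeneous} bundle, $f^*Q$ is equivariantly isomorphic to $\underline{H}\otimes L$ for some nontrivial $q$-dimensional $G$-representation $H$: this bundle has the same underlying holomorphic structure, the same invariant Hermitian metric and the same Chern connection as $L^{\oplus q}$, so it is perfectly compatible with strong projective flatness, yet its fibre $K$-representation is $\chi\otimes H|_K$, not $\chi^{\oplus q}$, and its space of sections is $H\otimes W$ with the tensor $G$-action, which is not $W$-isotypic. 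In that situation your Schur step (``$\tilde\phi(\C^n)$ must be of the form $V^{\oplus k}$'') collapses, since $\C^n$ could sit inside $H^0(M,f^*Q)$ as an irreducible constituent of $H\otimes W$ not isomorphic to $W$, and the fibrewise-surjectivity count no longer yields the diagonal form. So the step you flag as ``the main obstacle'' is not a technicality to be smoothed over later; it is the actual content of the theorem, and your proposal does not contain an argument for it. (Your subsidiary reductions --- $\rho'|_K$ trivial implies $\rho'$ trivial because $K$ contains a maximal torus, and the isotypic/evaluation argument identifying $f$ with the diagonal of $f_0$ once equivariance is secured --- are fine.)

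For comparison, the paper closes this hole in Lemma \ref{lem2}: since the strong-projective-flatness isomorphism $\phi:\tilde{L}\to f^*Q$ preserves both the holomorphic structures and the fibre metrics, it intertwines the Hermitian (Chern) connections, hence the holonomy groups, and the isotropy action of $K$ at $[e]$ is realized through the holonomy; this pins down $(f^*Q)_{[e]}\cong L_0\oplus\cdots\oplus L_0$ as a $K$-module. The paper then upgrades $K$-equivariance to $G$-equivariance by writing the components of $\phi$ as $[g,v]\mapsto[g,\alpha_1(g)v\oplus\cdots\oplus\alpha_q(g)v]$ and showing the $\alpha_i$ descend to holomorphic functions on the compact manifold $G/K$, hence are constants. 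After that the paper works in Nagatomo's framework, expressing $f$ as a $T$-deformation of the standard map and proving $T$ is $G$-equivariant (Lemma \ref{lem3}), preserves $\tilde{L}_0$ (Lemma \ref{lem4}), and finally equals the identity because $\phi$ is metric-preserving and $W$ is irreducible (Theorem \ref{thm2}). If you supply an argument of this kind (metric-plus-connection, or any other proof that the fibre $K$-module is $\chi^{\oplus q}$ and the twist $\rho'$ is trivial), your evaluation-map and isotypic-decomposition route to the diagonal form would be a legitimate alternative to the paper's $T=\mathrm{id}$ computation; without it, the proof is incomplete at its decisive point.
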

	
	$G$-equivariance of a holomorhic map $f:M\longrightarrow Gr_p(\C^n)$ means that there exists a Lie group 
	homomorphism $\rho:G\longrightarrow SU(n)$ which satisfies the following equaiton:
		\begin{equation}
		\label{15}
		f(gx) = \rho(g)f(x),\qquad {\rm for}\ x\in M, g\in G.
		\end{equation}
	
	The author would like to thank Professor Yasuyuki Nagatomo for his many advices and encouragement. 
	
\section{Preliminaries}
	For a detail of the argument of this section, see \cite{N1}.
	Let $\C^n$ be an $n$-dimensional complex vector space with a Hermitian inner product $(\cdot,\cdot)_n$ and 
	$Gr_p(\C^n)$ be the complex Grassmannian manifold of complex $p$-planes in $\C^n$.
	We denote by $S\rightarrow Gr$ the tautological bundle and by $\underline{\C^n}:=Gr_p(\C^n)\times \C^n\rightarrow Gr
	$ the trivial bundle over $Gr_p(\C^n)$.
	They are holomorphic vector bundles.
	The trivial bundle $\underline{\C^n}\rightarrow Gr$ has a Hermitian fiber metric induced by $(\cdot,\cdot)_n$, which  
	is denoted by the same notation.
	Since $S\rightarrow Gr$ is a subbundle of $\underline{\C^n}\rightarrow Gr$, The bundle $S\rightarrow Gr$ has a 
	Hermitian fiber metric $h_S$ induced by $(\cdot,\cdot)_n$ and we obtain a holomorphic vector bundle $Q\rightarrow 
	Gr$ satisfying the following short exact sequence:
		\begin{equation}
		\label{1}
		0\longrightarrow S\longrightarrow \underline{\C^n}\longrightarrow Q\longrightarrow 0.
		\end{equation}
	This is called the {\it universal quotient bundle} over $Gr_p(\C^n)$.
	When we denote by $S^{\perp}\rightarrow Gr$ the orthogonal complement bundle of $S\rightarrow Gr$ in $
	\underline{\C^n}\rightarrow Gr$, $Q\rightarrow Gr$ is isomorphic to $S^{\perp}\rightarrow Gr$ as a 
	$C^{\infty}$-complex 
	vector bundle.
	Thus $Q\rightarrow Gr$ has a Hermitian fiber metric $h_Q$ induced by the Hermitian fiber metric of $S^{\perp}
	\rightarrow Gr$.

	These vector bundles are all homogeneous vector bundles.
	We set $\tilde{G}:=SU(n)$ and $\tilde{K}:=S(U(p)\times U(q))$.
	Then $Gr_p(\C^n)\cong\tilde{G}/\tilde{K}$.
	Let $\C^p$ be a $p$-dimensional complex subspace of $\C^n$ such that $\C^p$ is an irreducible representation space of $\tilde{K}$.
	We denote by $\C^q$ the orthogonal complement space of $\C^p$ in $\C^n$, which is also an irreducble representation space of $\tilde{K}$.
	Then $S\rightarrow Gr$ and $S^{\perp}\rightarrow Gr$ are expressed as the following:
		\begin{equation*}
		S=\tilde{G}\times_{\tilde{K}}\C^p, \qquad S^{\perp} = \tilde{G}\times_{\tilde{K}}\C^q.
		\end{equation*}
	For the exact sequence (\ref{1}), the inclusion $S\longrightarrow \underline{\C^n}$ is expressed as
		\begin{equation*}
		S=\tilde{G}\times_{\tilde{K}}\C^p\ni [g,v]\longmapsto ([g],gv)\in \tilde{G}/\tilde{K}\times \C^n = 
		\underline{\C^n},
		\end{equation*}
	for $g\in \tilde{G}$ and $v\in \C^p$.
	Similarly $Q\rightarrow Gr$ is regarded as a subbundle of $\underline{\C^n}\rightarrow M$:
		\begin{equation*}
		Q\cong S^{\perp}\ni [g,v]\longmapsto ([g],gv)\in \underline{\C^n},
		\end{equation*}
	for $g\in\tilde{G}$ and $v\in \C^q$.
	When we regard $Q\rightarrow Gr$ a subbundle of $\underline{\C^n}\rightarrow Gr$ as above, the action of $G$ to $Q
	\rightarrow Gr$ is expressed as the following:
		\begin{equation}
		\label{2}
		g\cdot([\tilde{g}],\tilde{g}v) = (g\cdot[\tilde{g}],g\tilde{g}v),\qquad {\rm for}\ g,\tilde{g}\in \tilde{G}, v
		\in
		\C^q.
		\end{equation}
	
	Since the holomorphic tangent bundle $T_{1,0}Gr\rightarrow Gr$ over $Gr_p(\C^n)$ is identified with $S^*\otimes Q
	\rightarrow Gr$, where $S^*\rightarrow Gr$ is the dual bundle of $S\rightarrow Gr$, complex manifold $Gr_p(\C^n)$
	has a homogeneous Hermitian metric $h_{Gr}:=h_{S^*}\otimes h_Q$.
	This is called the Hermitian metric of Fubini-Study type of $Gr_p(\C^n)$ induced by $(\cdot,\cdot)_n$.
	
\begin{remark}
	When we consider the case that $p=n-1$\footnote{In this paper, the complex projective space means the complex 
	Grassmannian manifold $Gr_{n-1}(\C^n)$, not $Gr_1(\C^n)$.}, $(Gr_{n-1}(\C^n), h_{Gr})$ is the complex projective 
	space 
	with Fubini-Study metric of constant holomophic sectional curvature 2. (See \cite{Koga1}.)
\end{remark}

	Let $M$ be a compact complex manifold, $V\rightarrow M$ a holomorphic vector bundle with a Hermitian fiber 
	metric $h_V$ and $W$ the space of holomorphic sections of $V\rightarrow M$.
	We denote by $(\cdot,\cdot)_W$ the $L_2$-Hermitian inner product of $W$.
	Let $\tilde{W}$ be a subspace of $W$ and we denote by $ev$ a bundle homomorphism:
		\begin{equation*}
		ev:\underline{\tilde{W}}:=M\times \tilde{W} \longrightarrow V : (x,t)\longmapsto t(x).
		\end{equation*}
	This is called an {\it evaluation map}.
	We assume that the evaluation map $ev$ is surjective. In this case, $V\rightarrow M$ is called {\it globally 
	generated by $\tilde{W}$}.
	For each $x\in M$ we set the linear map $ev_x:\tilde{W}\longrightarrow V_x:t\longmapsto t(x)$.
	Since $V\rightarrow M$ is globally generated by $\tilde{W}$, $ev_x$ is surjective for each $x\in M$.
	Thus the dimension of the kernel ${\rm Ker}ev_x$ of $ev_x$ are independent of $x$, which is denoted by $p$.
	Therefore we obtain a holomorphic map
		\begin{equation*}
		f:M\longrightarrow Gr_p(\tilde{W}) : x\longmapsto {\rm Ker}ev_x.
		\end{equation*}
	This is called a {\it induced map} by $(L\to M, \tilde{W})$.
	When $\tilde{W}=W$, the induced map is called {\it standard map} by $L\to M$.
	
	On the other hand, let $f:M\longrightarrow Gr_p(\C^n)$ be a holomorphic map and $f^*Q\rightarrow M$ the pull-back 
	vector bundle of $Q\rightarrow Gr$ by $f$ with pull-back metric $h_Q$ and connection $\nabla^Q$.
	We assume that $f^*Q\rightarrow M$ is isomorphic to $V\rightarrow M$ as a holomorphic Hermitian vector bundle.
	For \cite{N1} there exists a semi-positive Hermitian endomorphism $T$ of $W$ such that $f$ is expressed as the 
	following map:
		\begin{equation}
		\label{3}
		f:M\longrightarrow Gr_{p}(W):x\longmapsto T^{-1}\left( f_0(x)\cap ({\rm Ker}\ T)^{\perp} \right),
		\end{equation}
	where $T^{-1}$ is the inverse of $T:{\rm Ker}T^{\perp}\longrightarrow {\rm Ker}T^{\perp}$.
	The semi-positive Hermitian endomorphism $T:W\longrightarrow W$ is obtaind by the following construction:
	by Borel-Weil Theory the complex vector space $\C^n$ is regarded as the space of holomorphic sections of $Q
	\rightarrow Gr_p(\C^n)$.
	We have a linear map $\iota:\C^n\longrightarrow W$ by restricting sections to $M$.
	
	\begin{definition}[\cite{N1}]
	\label{def2}
		A holomorphic map $f:M\longrightarrow Gr_p(\C^n)$ is called {\it full} if $\iota:\C^n\longrightarrow W$ is 
		injective. 
	\end{definition}	
	We assume that $f:M\longrightarrow Gr_p(\C^n)$ is full. 
	Then $\C^n$ can be considered as a subspace of $W$ by $\iota$.
	Let $ev_{\C}:\underline{\C^n}\longrightarrow V$ and $ev:\underline{W}\longrightarrow V$ be evaluation maps.
	Then for any $x\in M$, ${\rm Ker}ev_{\C_x}={\rm Ker}ev_x\cap \C^n$.
	Therefore $f:M\longrightarrow (Gr_p(\C^n),(\cdot,\cdot)_n)$ is expressed as 
		\begin{equation*}
		 f(x)= {\rm Ker}ev_x\cap \C^n. 
		\end{equation*}
	The Hermitian inner product $(\cdot,\cdot)_n$ is not always coincide with $(\cdot,\cdot)_W$.
	Let $\underline{T}$ be the positive Hermitian endomorphism of $\C^n$ which satisfies that
		\begin{equation}
		\label{4}
		(\underline{T}u,\underline{T}v)_n = (u,v)_W
		\end{equation}
	for any $u,v\in \C^n$.
	We have an isometry
		$$\underline{T}^{-1}:(Gr_p(\C^n),(\cdot,\cdot)_n)\longrightarrow (Gr_p(\C^n),(\cdot,\cdot)_W): U\longmapsto 
		\underline{T}^{-1}U.$$
	Let $\pi:W\longrightarrow \C^n$ be the orthogonal projection onto $\C^n$ with respect to $(\cdot,\cdot)_W$.
	We denote by $T:=\underline{T}\circ\pi$ an endomophism of $W$, which is semi-positive Hermitian.
	Consequently $f:M\longrightarrow Gr_p(\C^n)$ is expressed as
		\begin{equation}
		\label{5}
		f:M\longrightarrow (Gr_p(W),(\cdot,\cdot)_W):x\longmapsto T^{-1}(f_0(x)\cap{\rm Ker}T^{\perp}).
		\end{equation}
	This means that a holomorphic map which has the pull-back bundle $f^*Q\rightarrow M$ isomorphic to $V\rightarrow M$ 
	is expressed a deformation of the standard map induced by $V\rightarrow M$.
	
	Let $M=G/K$ be a compact homogeneous K\"{a}hler manifold, $V_0$ a irreducible $K$-representation space and $V=G
	\times_KV_0\rightarrow M$ a holomorphic homogeneous vector bundle.
	We denote by $W$ the space of holomoprhic sections of $V$ with $L_2$-Hermitian inner product $(\cdot,\cdot)_W$.
	It follows from Bott-Borel-Weil theory that $W$ is an irreducible $G$-representation space.
	For $g\in G$ and $t\in W$, the action of $G$ to $W$ is expressed as
		\begin{equation*}
		(g\cdot t)([g_0]) := g(t(g^{-1}[g_0])),\qquad g_0\in G.
		\end{equation*}
	
	We assume that the evaluation map $ev:M\times W\longrightarrow V$ is surjective.

	\begin{proposition}[\cite{N1}]
	\label{prop2}
		$V_0$ is considered as a subspace of $W$.
	\end{proposition}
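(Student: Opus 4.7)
The plan is to exhibit the embedding $V_0\hookrightarrow W$ directly by using the evaluation map at the base point, together with the $K$-invariance of the $L_2$-inner product.

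First, set $o=[e]\in M=G/K$ and identify the fiber $V_o$ with $V_0$ through the homogeneous bundle presentation $V=G\times_K V_0$. Consider the evaluation at $o$,
\[
ev_o:W\longrightarrow V_o=V_0,\qquad t\longmapsto t(o).
\]
Surjectivity of $ev:M\times W\to V$ at every point gives surjectivity of $ev_o$ in particular. From the $G$-action formula $(g\cdot t)([g_0])=g(t(g^{-1}[g_0]))$, specializing to $g=k\in K$ and $[g_0]=o$, we obtain $(k\cdot t)(o)=k\cdot t(o)$, so that $ev_o$ is $K$-equivariant.

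Next, I would use the $G$-invariance of $(\cdot,\cdot)_W$, which follows from the $G$-invariance of the Hermitian fiber metric on the homogeneous bundle $V\to M$ together with the $G$-invariance of the K\"ahler volume form of $M$. In particular $(\cdot,\cdot)_W$ is $K$-invariant, making $W$ a unitary $K$-representation. Since ${\rm Ker}\,ev_o\subset W$ is a $K$-invariant subspace, we obtain a $K$-invariant orthogonal decomposition
\[
W={\rm Ker}\,ev_o\oplus ({\rm Ker}\,ev_o)^{\perp}.
\]
The restriction of $ev_o$ to $({\rm Ker}\,ev_o)^{\perp}$ is then a $K$-equivariant linear isomorphism onto $V_0$, which realizes $V_0$ as a $K$-invariant subspace of $W$.

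No serious obstacle is present in this argument; the only facts that require comment are the $K$-equivariance of $ev_o$ and the $K$-invariance of $(\cdot,\cdot)_W$, both of which are standard for homogeneous holomorphic bundles over compact homogeneous spaces. The real content of the proposition is simply that complete reducibility for the compact group $K$ promotes the $K$-equivariant surjection $W\twoheadrightarrow V_0$ to a $K$-equivariant splitting, and one should note that when $V_0$ happens to appear in $W$ with multiplicity greater than one as a $K$-module, the embedding singles out a preferred copy, namely the orthogonal complement of ${\rm Ker}\,ev_o$.
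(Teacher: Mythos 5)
Your argument is correct and is essentially the identification the paper itself uses: although the paper states the proposition without proof (citing \cite{N1}), it immediately afterwards sets $\pi_0:=ev_{[e]}$, $U_0:={\rm Ker}\,ev_{[e]}$ and writes $W$ as the orthogonal direct sum of $U_0$ and $V_0$, i.e.\ it realizes $V_0$ as $({\rm Ker}\,ev_{[e]})^{\perp}\subset W$ via the $K$-equivariant evaluation at the base point, exactly as you do. The only cosmetic difference is that the paper's embedding is effectively the adjoint $ev_{[e]}^{*}:V_0\to W$ (cf.\ its formula $ev^{*}([g,v])=([g],gv)$) rather than the inverse of $ev_{[e]}$ restricted to $({\rm Ker}\,ev_{[e]})^{\perp}$; since $V_0$ is $K$-irreducible, Schur's lemma shows these differ only by a positive scalar and single out the same subspace.
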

	
	We set $\pi_0:= ev_{[e]}:W\longrightarrow V_0$.
	For $g\in G$ and $t\in W$, we can calculate
		\begin{align}
		\label{13}
		\begin{split}
		ev([g],t)&=t([g])=g\cdot g^{-1}t(g\cdot[e]) \\
		&=g\cdot ev([e], g^{-1}t)=g[e,\pi_0(g^{-1}t)]=[g,\pi_0(g^{-1}t)].
		\end{split}
		\end{align}
	The adjoint map $ev^*$ of $ev$ is expressed as $ev^*([g,v]) = ([g],gv)$.

	Let $U_0:={\rm Ker}ev_{[e]}$.
	Then $U_0$ is also $K$-representation space and $W$ is orthogonal direct sum of $U_0$ and $V_0$.
	It follows from (\ref{13}) that ${\rm Ker}ev_{[g]}= gU_0$.
	Therefore the standard map $f_0:M\longrightarrow Gr(W)$ induced by $V\rightarrow M$ is that
		\begin{equation}
		\label{6}
		f_0([g]) = gU_0.
		\end{equation}	
	Thus the standard map induced by a holomorphic homogeneous vecor bundle is $G$-equivariant.
	
	Let $f:M\longrightarrow Gr_p(\C^n)$ be a full holomorphic map such that $f^*Q\rightarrow M$ is holomorphic 
	isomorphic to $V\rightarrow M$.
	Then there exists a semi-positive Hermitian endomorphism $T:W\longrightarrow W$ such that the map $f$ is expressed as the following:
		\begin{equation*}
		f:M\longrightarrow (Gr_p(\C^n),(\cdot,\cdot)_W):
		[g] \longmapsto T^{-1}( gU_0\cap ({\rm Ker}T)^{\perp}).
		\end{equation*}
	Pulling the sequence (\ref{1}) back, we have a short exact sequence:
		\begin{equation}
		\label{7}
		0\longrightarrow f^*S\longrightarrow \underline{\C^n}:= M\times \C^n\longrightarrow f^*Q\longrightarrow 0.
		\end{equation}
	The vector bundle $f^*Q\rightarrow M$ is regarded as a subbundle of $\underline{\C^n}\rightarrow M$, which is 
	orthogonal complement bundle of $f^*S\rightarrow M$:
		\begin{equation*}
		f^*Q = \{ ([g], v)\in \underline{\C^n} | f([g]) \perp v \}.
		\end{equation*}
	It follows from (\ref{3}) that we have
		\begin{equation*}
		0=(v,T^{-1}(gU_0\cap \C^n))_W=(g^{-1}T^{-1}v,U_0\cap \C^n)_W.
		\end{equation*}
	Then we have $g^{-1}T^{-1}v\in V_0 \iff v\in TgV_0$.
	Therefore the isomorphism $\phi:V\longrightarrow f^*Q\subset \underline{\C^n}$ is expressed as the following:
		\begin{equation}
		\label{8}
		[g,v]\longmapsto ([g], Tgv),\qquad {\rm for}\ g\in G,\ v\in V_0.
		\end{equation}

\section{Main Theorem and its Proof}

	Let $M$ be a compact simply connected homogeneous K\"ahler manifold, $G$ the isometry group of $M$ and $K$ an isotropy subgroup of $G$.
	Let $f:M\longrightarrow Gr_p(\C^n)$ be a full holomorphic strongly projectively flat map.
	By definition of strongly projectively flatness there exists a Hermitian line bundle $L\rightarrow M$ such that 
	$f^*Q\rightarrow M$ is isomorphic to $\tilde{L}\rightarrow M$ as a Hermitian vector bundle, where $\tilde{L}
	\rightarrow M$ is orthogonal direct sum of $q$-copies of $L\rightarrow M$.
	Since $M$ is a compact simply connected homogeneous K\"ahler, $L\rightarrow M$ is homogeneous.
	We set $L_0$ the $1$-dimensional $K$-representation space such that $L = G\times_KL_0$.
	Then  we have
		\begin{equation*}
		\tilde{L}=L\oplus\cdots\oplus L= G\times_K(L_0\oplus\cdots\oplus L_0)=G\times_K\tilde{L}_0,
		\end{equation*}
	where $\tilde{L}_0$ is $q$-orthogonal direct sum of $L_0$.
	We denote by $W$ and $\tilde{W}$ the spaces of holomorphic sections of $L\rightarrow M$ and $\tilde{L}\rightarrow M$ 
	respectively and we set $N$ the dimension of $W$.
	By definition of $\tilde{L}\rightarrow M$, $\tilde{W}$ is regarded as $q$-orthogonal direct sum of $W$.
	Let $\pi_j:\tilde{W}\longrightarrow W$ be the orthogonal projection onto the $j$-th component of $\tilde{W}$.
	It follows from Proposition \ref{prop2} that $L_0$ is a subspace of $W$ and $\tilde{L}_0$ is a subspace of $
	\tilde{W}$ as a $K$-representation space.
	When we restrict $\pi_j$ to $\tilde{L}_0$, $\pi_j|_{\tilde{L}_0}$ is orthogonal projection of $\tilde{L}_0$ onto the 
	$j$-th component of $\tilde{L}_0$.
	We denote by
		\begin{equation*}
		ev:M\times W\longrightarrow L,\qquad
		\tilde{ev}:M\times \tilde{W}\longrightarrow \tilde{L}
		\end{equation*}
	the evaluation maps respectively and by $f_0:M\longrightarrow Gr_{N-1}(W)$ and $\tilde{f}_0:M\longrightarrow 
	Gr_{q(N-1)}(\tilde{W})$ the standard maps induced by $L\rightarrow M$ and $\tilde{L}\rightarrow M$ respectively.
	Since $\tilde{W}$ is orthogonal direct sum of $q$-copies of $W$, we have
		\begin{align}
		\label{14}
		\begin{split}
		\tilde{ev}([g],t)&=\tilde{ev}([g],t_1\oplus\cdots\oplus t_q) \\
		&=ev([g],t_1)\oplus\cdots\oplus ev([g],t_q)\in L\oplus
		\cdots\oplus L,
		\end{split}
		\end{align}
	where $t=t_1\oplus\cdots\oplus t_q$ is orthogonal decomposition with respect to $\tilde{W}=W\oplus\cdots\oplus W$.
	We set $U_0:={\rm Ker}ev_{[e]}$.
	Then it follows from (\ref{6}) that $f_0([g])=gU_0$.
	It follows from (\ref{14}) that the map $\tilde{f}_0$ is expressed as
		\begin{align}
		\label{9}
		\begin{split}
		\tilde{f}_0:& M\longrightarrow Gr_{N-1}(W)\times\cdots\times Gr_{N-1}(W)\longrightarrow Gr_{q(N-1)}(\tilde{W}),
		\\
		&[g]\longmapsto (gU_0,\cdots,gU_0)\longmapsto gU_0\oplus\cdots\oplus gU_0=g\cdot(U_0\oplus\cdots\oplus U_0).
		\end{split}
		\end{align}
	
	Since $f:M\longrightarrow Gr_p(\C^n)$ is full, $\C^n$ is a subspace of $\tilde{W}$.
	It follows from the previous section that there exists a semi-positive Hermitian endomorphism $T$ of $\tilde{W}$ and 
	a bundle isomorphism $\phi:\tilde{L}\longrightarrow f^*Q$ such that maps $f:M\longrightarrow Gr_p(\C^n)$ and $\phi:
	\tilde{L}\longrightarrow f^*Q$ is expressed as the 
	following:
		\begin{align}
		\label{10}
		& f([g]) = T^{-1}\left( \tilde{f}_0([g])\cap ({\rm Ker}T)^{\perp}\right), \\
		\label{11}
		& \phi([g,v]) = ([g], Tgv),
		\end{align}
	for $g\in G$ and $v\in \tilde{L}_0$.
	
	By using the above notations, we rewrite Main theorem as followings:
	
	\begin{theorem}
		Let $M$ be a compact simply connected homogeneous K\"ahler manifold and $G$ the isometry group of $K$.
		Let $f:M\longrightarrow Gr_p(\C^n)$ be a full holomorphic strongly projectively flat map into the complex 
		Grassmannian manifold.
		Then $f$ is $G$-equivariant if and onlyl if $f$ is the standard map.
	\end{theorem}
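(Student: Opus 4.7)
The plan is to exploit the explicit description of $f$ given by (10)--(11) together with representation-theoretic constraints coming from $G$-equivariance. The direction ``if $f$ is the standard map then $f$ is $G$-equivariant'' is immediate from (9), since $\tilde{f}_0([g]) = g\cdot(U_0\oplus\cdots\oplus U_0)$ is exactly of the form required for $G$-equivariance (with $\rho$ the $G$-action on $\tilde{W}$, after the standard identification into $SU(n)$). So I focus on the converse.

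First, I would verify that the inclusion $\iota:\C^n\hookrightarrow\tilde{W}$ is $G$-equivariant; this follows because $\iota$ is defined by pulling back the Borel--Weil identification $\C^n\cong H^0(Gr_p(\C^n),Q)$ along the $G$-equivariant map $f$. Hence $\C^n$ is a $\rho(G)$-invariant subspace of $\tilde{W}=W\oplus\cdots\oplus W$. Since $W$ is an irreducible $G$-module by Bott--Borel--Weil, writing $\tilde{W}\cong\C^q\otimes W$ with the trivial $G$-action on the multiplicity factor, Schur's Lemma identifies every $G$-invariant subspace as $V\otimes W$ for some subspace $V\subseteq\C^q$. This yields $\C^n\cong V\otimes W$ and $n=(\dim V)N$.

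Next I would show that $\underline{T}$ commutes with $\rho(G)$. From (4) and the invariance of both $(\cdot,\cdot)_n$ under $\rho(G)\subset SU(n)$ and of $(\cdot,\cdot)_W$ under $G$, one computes $\rho(g)^{*}\underline{T}^2\rho(g)=\underline{T}^2$; since $\rho(g)$ is unitary and $\underline{T}$ is positive Hermitian, $\underline{T}$ commutes with $\rho(G)$. Schur's Lemma then forces $\underline{T}=A\otimes\mathrm{Id}_W$ for some positive Hermitian $A$ on $V$, so $T=(A\circ P)\otimes\mathrm{Id}_W$ where $P:\C^q\to V$ is the orthogonal projection; in particular $\ker T=V^{\perp}\otimes W$ and $(\ker T)^{\perp}=V\otimes W=\C^n$. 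Substituting $\tilde{f}_0([g])=\C^q\otimes gU_0$ from (9) into (10) yields $\tilde{f}_0([g])\cap(\ker T)^{\perp}=V\otimes gU_0$ and hence $f([g])=T^{-1}(V\otimes gU_0)=V\otimes gU_0$, so that $p=(\dim V)(N-1)$. A rank count closes the argument: $\mathrm{rank}(f^{*}Q)=n-p$, but also $f^{*}Q\cong\tilde{L}$ has rank $q$, so $q=(\dim V)N-(\dim V)(N-1)=\dim V$; therefore $V=\C^q$, $\C^n=\tilde{W}$, and $f([g])=\C^q\otimes gU_0=\tilde{f}_0([g])$.

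I expect the main obstacle to be cleanly verifying the $G$-equivariance of $\iota$, since this requires matching the abstract $\rho$-action on $\C^n$ with the action induced from viewing $\C^n$ as a subspace of the section space $\tilde{W}$ via Borel--Weil. Once that compatibility is established, the two applications of Schur's Lemma and the rank count proceed in a mechanical fashion.
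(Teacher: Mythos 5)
Your reduction of the converse to two applications of Schur's Lemma plus a rank count is an attractive and genuinely different route from the paper's (the paper instead shows $T$ is $G$-equivariant, that $T(\tilde L_0)\subset\tilde L_0$, and that $T^2=\mathrm{Id}$ on $\tilde L_0$ using the metric-preserving property of $\phi$). But the step you flag as ``the main obstacle'' and then wave through is precisely the heart of the matter, and your one-line justification for it is not valid. The inclusion $\iota:\C^n\hookrightarrow\tilde W$ is \emph{not} just ``pulling back the Borel--Weil identification along the $G$-equivariant map $f$'': it is the composition of the restriction map $\C^n\to H^0(M,f^*Q)$ (which is indeed $\rho$-equivariant once one checks, as the paper does after Lemma \ref{lem1}, that $f^*Q$ is homogeneous via $\rho$ and that restriction intertwines the actions) with the isomorphism $H^0(M,f^*Q)\cong H^0(M,\tilde L)=\tilde W$ induced by the bundle isomorphism $\phi:\tilde L\to f^*Q$ furnished by the strongly projectively flat hypothesis. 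A priori $\phi$ is only a holomorphic, metric-preserving isomorphism; it carries no equivariance, so the $\rho$-module structure on $\C^n$ has no reason to correspond under $\iota$ to a $G$-submodule of $\tilde W$. Establishing that $\phi$ is automatically $G$-equivariant is the paper's Lemma \ref{lem2}, and its proof is not formal: it uses that $\phi$ preserves the Hermitian connections (hence a holonomy argument identifying $f^*Q$ and $\tilde L$ as homogeneous bundles) and then a Liouville-type argument (holomorphic functions on the compact $G/K$ are constant) to kill the off-diagonal coefficient functions. Note that this is exactly where the Hermitian part of ``strongly projectively flat'' enters; your proposal never uses it, which is a symptom of the missing step. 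Without this lemma, the claim ``$\C^n$ is a $\rho(G)$-invariant subspace of $\tilde W$'' and the subsequent computation $\rho(g)^*\underline T^2\rho(g)=\underline T^2$ (which also needs $\iota$ to intertwine the actions and the $L_2$-metric to be $G$-invariant) are unsupported.

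A secondary point: even granting equivariance of $\iota$, your endgame only yields $\underline T=A\otimes\mathrm{Id}_W$ with $A$ positive Hermitian and concludes that $f([g])$ coincides with $\tilde f_0([g])$ as subspaces. The paper's formulation of the sufficient condition is stronger, namely $T=\mathrm{Id}_{\tilde W}$, which also identifies $(\cdot,\cdot)_n$ with $(\cdot,\cdot)_W$ and hence gives the congruence with the orthogonal decomposition $\C^n=W\oplus\cdots\oplus W$ asserted in the main theorem. To pin down $A=\mathrm{Id}$ you still need the fiber-metric argument of Theorem \ref{thm2}: since $\phi$ is an isometry on fibers, $(v_1,v_2)_{\tilde L_0}=(T^2v_1,v_2)_{\tilde L_0}$, so $T^2=\mathrm{Id}$ on $\tilde L_0$, and irreducibility plus semi-positivity force $T=\mathrm{Id}$. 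So even on its own terms your argument should be supplemented by this step (or by an equivalent use of the metric part of the hypothesis) to recover the full statement.
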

	
	In order to prove this theorem, it is sufficient to show that the Hermitian endomorphism $T:\tilde{W}\longrightarrow 
	\tilde{W}$ is the identity map of $\tilde{W}$.

	From now on, we assume that $f:M\longrightarrow Gr_p(\C^n)$ is $G$-equivariant. 
	Then there exists a Lie group homomorphism $\rho:G\longrightarrow SU(n)$ which satisfy the following equation:
		\begin{equation}
		\label{12}
		f(g[\tilde{g}]) = \rho(g)f([\tilde{g}]),\qquad g,\tilde{g}\in G.
		\end{equation}
	By definition $\C^n$ is $G$-representation space and a vector subspace of $\tilde{W}$.
	
	\begin{lemma}
	\label{lem1}
		$f^*Q\rightarrow M$ is homogeneous.
	\end{lemma}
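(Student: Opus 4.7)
The plan is to transfer the $\tilde{G}$-homogeneous structure of $Q \to Gr_p(\C^n)$ back to $f^*Q \to M$ via the group homomorphism $\rho: G \to SU(n) = \tilde{G}$ provided by the $G$-equivariance hypothesis. Recall from Section~2 that $Q \to Gr$ sits inside the trivial bundle $\underline{\C^n} \to Gr$ as the orthogonal complement of $S$, and the $\tilde{G}$-action on $Q$ is given by the restriction of the obvious $\tilde{G}$-action on $\underline{\C^n}$, namely $g \cdot ([\tilde g], w) = (g[\tilde g], gw)$, as recorded in (\ref{2}).

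First I would realize $f^*Q$ concretely as the subbundle of $\underline{\C^n} = M \times \C^n$ whose fiber at $x \in M$ is $Q_{f(x)} \subset \C^n$. I then define a $G$-action on the total space of $f^*Q$ by
\begin{equation*}
	g \cdot (x, v) := (gx,\, \rho(g) v), \qquad g \in G,\ (x,v) \in f^*Q.
\end{equation*}
The only thing to verify is that this action preserves the subbundle $f^*Q$. But if $v \in Q_{f(x)}$, then by the $\tilde{G}$-equivariance of $Q$ and by (\ref{12}) we have
\begin{equation*}
	\rho(g) v \in \rho(g) Q_{f(x)} = Q_{\rho(g) f(x)} = Q_{f(gx)},
\end{equation*}
so $(gx, \rho(g)v)$ indeed lies in the fiber of $f^*Q$ over $gx$.

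Next I would check that this action is fiber-linear (immediate, since $\rho(g)$ is linear on $\C^n$), covers the $G$-action on $M$ (by construction), and preserves both the holomorphic structure and the Hermitian fiber metric (because $\rho(g) \in SU(n)$ is unitary on $\C^n$, the inclusion $f^*Q \hookrightarrow \underline{\C^n}$ is holomorphic and isometric, and $f$ is holomorphic). Finally the $G$-action on $M = G/K$ is transitive, so the action on $f^*Q$ is transitive on the set of fibers, and the isotropy at $[e]$ acts linearly on $(f^*Q)_{[e]} = Q_{f([e])}$ through $\rho(K) \subset SU(n)$. This exhibits $f^*Q \to M$ as a $G$-homogeneous holomorphic Hermitian vector bundle, concretely $f^*Q \cong G \times_K Q_{f([e])}$.

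There is no real obstacle here: the argument is just the standard fact that pulling back an equivariant bundle along an equivariant map produces an equivariant bundle. The only point requiring a touch of care is making sure that the $G$-action is defined using the representation $\rho$ on $\C^n$ (not some unrelated action) and that well-definedness is checked against the geometric description of $f^*Q$ as a subbundle of $\underline{\C^n}$; both follow directly from (\ref{12}) and (\ref{2}).
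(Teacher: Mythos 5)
Your proposal is correct and follows essentially the same route as the paper: both define the $G$-action on $f^*Q$ by $g\cdot(x,v)=(gx,\rho(g)v)$, use the equivariance relation (\ref{12}) together with the $\tilde{G}$-action (\ref{2}) to see the action is well defined on the pulled-back fibers, and conclude homogeneity from the transitivity of $G$ on $M$. Your extra verifications (fiber-linearity, preservation of the metric and holomorphic structure) are sound but not a different method.
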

	\begin{proof}
		The definition of the pull-back bundle $f^*Q\to M$ is that
			\begin{equation*}
			f^*Q= \{ ([g],v) \in M\times Q | f([g]) = \pi(v) \},
			\end{equation*}
		where $\pi:Q\to Gr_p(\C^n)$ is the natural projection.
		For any $([\tilde{g}],v) \in f^*Q$ and $g\in G$, we have an action of $G$ to $f^*Q\rightarrow M$ by
			\begin{equation*}
			g\cdot([\tilde{g}],v) = (g[\tilde{g}],\rho(g)v).
			\end{equation*}
		Since $G$ acts to $M$ transitively, $f^*Q\rightarrow M$ is homogeneous.
	\end{proof}
	
	Since $f^*Q\rightarrow M$ is homogeneous, the space of holomorphic sections of $f^*Q\rightarrow M$ is $G$-
	representation space.
	Let $t$ be a holomorphic section of $f^*Q\rightarrow M$.
	For $g\in G$ and $x\in M$, we have
		\begin{equation*}
		(g\cdot t)(x) = g\left( t(g^{-1}x) \right).
		\end{equation*}
	For $t\in \C^n$, we obtain a holomorphic section of $f^*Q\rightarrow M$ which is expressed as
		\begin{equation*}
		t(x) = (x,t(f(x))),\qquad {\rm for}\ x\in M.
		\end{equation*}
	Thus for $g,\tilde{g}\in G$ we obtain
		\begin{align*}
		(g\cdot t)(x) &= g(t(g^{-1}x)) = g\left(g^{-1}x, t(f(g^{-1}x))\right),\\
		&= \left(x,\rho(g)t(\rho(g^{-1})f(x))\right) = \left(x,(\rho(g)t)(f(x))\right),\\
		&= (\rho(g)t)(x).
		\end{align*}
	Therefore $\C^n$ is a $G$-representation subspace of the space of holomorphic sections of $f^*Q\rightarrow M$.

	\begin{lemma}
	\label{lem2}
		The holomorphic isomorphism $\phi:\tilde{L}\longrightarrow f^*Q$ is $G$-equivariant.
	\end{lemma}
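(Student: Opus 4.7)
The plan is to verify $G$-equivariance of $\phi$ by direct computation from the formula $\phi([g_0,v])=([g_0],Tg_0v)$ of (\ref{11}). The crux is the intertwining identity
\begin{equation*}
T(g\cdot w)=\rho(g)\,T(w),\qquad g\in G,\ w\in\tilde{W},
\end{equation*}
after which equivariance of $\phi$ drops out in a few lines using the $G$-action on $\tilde{L}$ and on $f^{*}Q$ (the latter supplied by Lemma \ref{lem1}).

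To establish this intertwining identity, I would use the decomposition $T=\underline{T}\circ\pi$ from Section 2, where $\pi\colon\tilde{W}\to\C^n$ is orthogonal projection for the $L_{2}$-inner product on $\tilde{W}$ and $\underline{T}$ is the positive Hermitian endomorphism of $\C^n$ characterized by the analogue of (\ref{4}). Since $\rho$ takes values in $SU(n)$, the inner product $(\cdot,\cdot)_n$ is $\rho(G)$-invariant, while the $L_{2}$-inner product on $\tilde{W}$ is $G$-invariant because $G$ acts by isometries of $M$ preserving the Hermitian structure of $\tilde{L}$. The computation $(g\cdot t)(x)=(\rho(g)t)(x)$ carried out immediately before the lemma shows that the embedding $\C^n\hookrightarrow\tilde{W}$ intertwines $\rho$ with the restriction of the section action, so the two a priori distinct $G$-actions on $\C^n$ coincide. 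Applying this common action to both sides of the defining relation of $\underline{T}$ and invoking invariance of each inner product gives $\underline{T}\rho(g)=\rho(g)\underline{T}$. Moreover $\C^n\subset\tilde{W}$ is $G$-stable with $G$-invariant complement, so $\pi$ is $G$-equivariant; composing yields the identity displayed above.

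With $T$ intertwining the two $G$-actions, the conclusion is immediate: for $g,g_0\in G$ and $v\in\tilde{L}_0$,
\begin{equation*}
\phi(g\cdot[g_0,v])=\phi([gg_0,v])=([gg_0],T(gg_0\cdot v))=(g[g_0],\rho(g)\,Tg_0v)=g\cdot\phi([g_0,v]),
\end{equation*}
the last equality by the definition of the $G$-action on $f^{*}Q$ in Lemma \ref{lem1}. The step that demands the most care is the identification of the two $G$-actions on $\C^n$ --- the intrinsic one via $\rho$ and the restriction from the section-space action on $\tilde{W}$ --- since this compatibility is exactly what allows the single operator $\underline{T}$ to be intertwined simultaneously from both sides; that compatibility is precisely what the $(g\cdot t)(x)=(\rho(g)t)(x)$ calculation preceding the lemma is designed to deliver.
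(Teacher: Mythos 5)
Your plan inverts the paper's logical order: the intertwining identity $T(g\cdot w)=\rho(g)T(w)$ that you take as the crux is precisely Lemma \ref{lem3} of the paper, which is proved \emph{from} Lemma \ref{lem2}, not the other way around. That by itself would be acceptable if you had an independent argument for it, but your argument is circular at the key step. The computation $(g\cdot t)=\rho(g)t$ carried out before the lemma concerns $\C^n$ viewed inside the space of holomorphic sections of $f^*Q\rightarrow M$, with the $G$-action supplied by Lemma \ref{lem1}. The space $\tilde{W}$, however, is the space of holomorphic sections of $\tilde{L}\rightarrow M$ with the $G$-action coming from the homogeneous structure $\tilde{L}=G\times_K\tilde{L}_0$, and $\C^n$ sits inside $\tilde{W}$ only through the isomorphism $\phi^{-1}$. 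To conclude that $\C^n$ is $G$-stable in $\tilde{W}$ and that the restricted action coincides with $\rho$ --- which is what you need for the equivariance of $\pi$, for the $\rho$-invariance of $(\cdot,\cdot)_W|_{\C^n}$ in the defining relation of $\underline{T}$, and hence for $T(g\cdot w)=\rho(g)T(w)$ --- you must already know that $\phi$ intertwines the two bundle actions. That is exactly the statement of Lemma \ref{lem2}; indeed the paper explicitly records ``$\C^n$ is a $G$-representation subspace of $\tilde{W}$'' only \emph{after} both Lemma \ref{lem1} and Lemma \ref{lem2} are available. So your proposal assumes what is to be proved.

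For comparison, the paper's proof avoids $T$ altogether: it first upgrades $\phi$ to a $K$-equivariant map via holonomy (the isomorphism preserves Hermitian connections, and the isotropy action at $[e]$ factors through the holonomy group), which identifies $f^*Q$ with $\tilde{L}$ as a homogeneous bundle; it then regards $\phi$ as a holomorphic automorphism of $\tilde{L}=G\times_K(L_{(1)}\oplus\cdots\oplus L_{(q)})$, writes its components as scalar functions $\alpha_i$ on $G$ (the fibers being one-dimensional $K$-modules), shows the $\alpha_i$ descend to holomorphic functions on the compact manifold $G/K$, hence are constant, and constancy gives $G$-equivariance. If you want to salvage your route, you would need an argument for the $G$-stability of $\C^n\subset\tilde{W}$ (or for the equivariance of $T$) that does not pass through the equivariance of $\phi$; as written, no such argument is supplied.
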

	\begin{proof}
		At first we show that $f^*Q\to M$ is isomorphic to $\tilde{L}$ as a homogeneous vector bundle.
		Since $\phi$ preserves Hermitian connections, bundles $\tilde{L}\to M$ and $f^*Q\to M$ have same holonomy groups and $\phi$ is holonomy equivariant.
		Since the action of $K$ to $f^*Q\to M$ and $L\to M$ at $[e]$, where $e$ is the unit element in $G$, is expressed as a action of the holonomy group, $\phi$ is $K$-equivariant.
		Thus $f^*Q_{[e]}$ is isomorphic to $L_0\oplus\cdots\oplus L_0$ as a $K$-representation space.
		Therefore $f^*Q\to M$ is isomorphic to $\tilde{L}\to M$ as a homogeneous vector bundle.
		
		Finally we show that a holomorphic isomorphism $\phi:\tilde{L}\longrightarrow \tilde{L}$ is $G$-equivariant.
		We denote by $\tilde{L}\cong L_1\oplus\cdots\oplus L_q$ and $L_j=G\times_KL_{(j)}$, where $L_j\rightarrow M$ is the $j$-th component and $L_{(j)}$ is isomorphic to $L_0$ as a $K$-representation space for $j=1,\cdots,q$.
		Then we have
			\begin{equation*}
			\tilde{L} = G\times_K(L_{(1)}\oplus\cdots\oplus L_{(q)}).
			\end{equation*}
		
		Let $\phi_j:L_j\longrightarrow \tilde{L}$ be the restriction of $\phi:\tilde{L}\longrightarrow \tilde{L}$ to $L_j\rightarrow M$.
		Then $\phi_j$ is expressed as the following:
			\begin{equation*}
			\phi_j([g,v]) = [g,\varphi_1(g)(v)\oplus\cdots\oplus\varphi_q(g)(v)],\qquad {\rm for}\ g\in G, v\in L_{(j)},
			\end{equation*}
		where $\varphi_i(g):L_{(j)}\longrightarrow L_{(i)}$ is a linear map for $i=1,\cdots,q$.
		Since $L_{(i)}$ and $L_{(j)}$ are isomorphic $1$-dimensional $K$-representation spaces, there exists a complex number $\alpha_i(g)$ such that $\varphi_i(g)(v)=\alpha_i(g)v$.
		Thus we have
			\begin{equation}
			\label{16}
			\phi_j([g,v]) = [g,\alpha_1(g)v\oplus\cdots\oplus\alpha_q(g)v],\qquad {\rm for}\ g\in G, v\in L_{(j)}.
			\end{equation}
		Since $\phi_j$ is a bundle homomorphism, we obtain
			\begin{align*}
				&\phi_j([gk,v])=[gk,\alpha_1(gk)v\oplus\cdots\oplus\alpha_q(gk)v]=[g,\alpha_1(gk)kv\oplus\cdots\oplus\alpha_q(gk)kv], \\
				&\phi_j([g,kv])=[g,\alpha_1(g)kv\oplus\cdots\oplus\alpha_q(g)kv],
			\end{align*}
		for $g\in G$, $k\in K$ and $v\in L_{(j)}$.
		It follows that $\alpha_i(gk)= \alpha_i(g)$ for $i=1\cdots,q$, $g\in G$ and $k\in K$.
		Therefore $\alpha_i$ is a complex valued function on $G/K$.
		Since $\phi_j$ is holomorphic, so is $\alpha_i$ for $i=1,\cdots q$, which implies that $\alpha_i$ is a constant function for each $i$ because $G/K$ is compact.
		We regard $\alpha_i$ as a complex number.
		Then we have
			\begin{equation*}
			\phi_j([g,v]) = [g,\alpha_1v\oplus\cdots\oplus\alpha_qv],\qquad {\rm for}\ g\in G,v\in L_{(j)}.
			\end{equation*}
		This is $G$-equivariant for each $j=1,\cdots,q$.
		Consequently $\phi:\tilde{L}\longrightarrow \tilde{L}$ is $G$-equivariant.
	\end{proof}
	
	It follows from Lemma \ref{lem1} and Lemma \ref{lem2} that $\C^n$ is a $G$-representation subspace of $\C^n$.


	\begin{lemma}
	\label{lem3}
		The semi-positive Hermitian endomorphism $T:\tilde{W}\longrightarrow \tilde{W}$ is $G$-equivariant.
	\end{lemma}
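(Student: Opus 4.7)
The plan is to reduce the $G$-equivariance of $T$ to the $G$-equivariance of the positive Hermitian endomorphism $\underline{T}$ on $\mathbb{C}^n$ together with the orthogonal projection $\pi:\tilde{W}\longrightarrow\mathbb{C}^n$, and then exploit that $\underline{T}^2$ is characterized by the relation between two $G$-invariant Hermitian inner products on the same space. Recall that by construction $T=\underline{T}\circ\pi$, so it suffices to handle the two factors separately.

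First I would verify that the two relevant inner products on $\mathbb{C}^n$ are $G$-invariant and live under a single compatible $G$-action. The $L^2$-inner product $(\cdot,\cdot)_{\tilde{W}}$ is $G$-invariant because $G$ acts on $M$ by K\"ahler isometries and acts unitarily on the fibers of $\tilde{L}\to M$. The inner product $(\cdot,\cdot)_n$ is $G$-invariant by equation (\ref{12}) because $\rho$ takes values in $SU(n)$. The fact that $\mathbb{C}^n\subset\tilde{W}$ is $G$-stable and that the $\rho$-action on $\mathbb{C}^n$ coincides with the restriction of the $G$-action on $\tilde{W}$ follows from the paragraph preceding this lemma together with Lemma \ref{lem2}: the identification $\mathbb{C}^n\hookrightarrow\{\text{sections of }f^*Q\}\xrightarrow{\phi^{-1}}\tilde{W}$ is a chain of $G$-equivariant maps.

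Next I would use the defining relation $(\underline{T}u,\underline{T}v)_n=(u,v)_{\tilde{W}}$ to rewrite it in the equivalent form $(\underline{T}^2 u,v)_n=(u,v)_{\tilde{W}}$, which characterizes $\underline{T}^2$ uniquely. Substituting $gu,gv$ for $u,v$ and using the $G$-invariance of both inner products together with the fact that $\rho(g)$ preserves $(\cdot,\cdot)_n$, one reads off
\begin{equation*}
(\underline{T}^2 gu,v)_n=(\underline{T}^2 gu,gg^{-1}v)_n=(gu,g^{-1}v)_{\tilde{W}}\cdots=(g\underline{T}^2u,v)_n
\end{equation*}
for all $v$; hence $\underline{T}^2\rho(g)=\rho(g)\underline{T}^2$. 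Since $\underline{T}$ is the unique positive Hermitian square root of $\underline{T}^2$, functional calculus applied to the spectral decomposition yields $\underline{T}\rho(g)=\rho(g)\underline{T}$ as well.

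Finally, $G$-invariance of $\mathbb{C}^n\subset\tilde{W}$ together with unitarity of the $G$-action on $\tilde{W}$ makes the orthogonal complement $G$-invariant too, so $\pi$ intertwines the $G$-actions. Composing gives the $G$-equivariance of $T=\underline{T}\circ\pi$. The main subtlety lies in book-keeping: one must be sure that the two a priori different $G$-actions on $\mathbb{C}^n$ (through $\rho$ and through restriction from $\tilde{W}$) are actually the same action; once that identification is in place, the rest is a Schur-type argument that a positive Hermitian operator relating two invariant inner products is itself equivariant.
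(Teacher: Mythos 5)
Your argument is correct, but it takes a genuinely different route from the paper's. The paper never returns to the decomposition $T=\underline{T}\circ\pi$: it computes $\phi(g_1\cdot[g_2,v])$ in two ways, using the $G$-equivariance of $\phi$ from Lemma \ref{lem2} together with the explicit formulas (\ref{2}) and (\ref{11}), to obtain $Tgv=gTv$ for all $g\in G$, $v\in\tilde{L}_0$, and then extends this relation to all of $\tilde{W}$ by the spanning argument $\tilde{W}=G\tilde{L}_0$, which rests on the $G$-irreducibility of $W$. Your route instead isolates $\underline{T}$ as the positive operator comparing the two invariant inner products $(\cdot,\cdot)_n$ and $(\cdot,\cdot)_{\tilde{W}}$ on the $G$-stable subspace $\C^n\subset\tilde{W}$, deduces $\underline{T}^2\rho(g)=\rho(g)\underline{T}^2$, passes to $\underline{T}$ by uniqueness of the positive square root, and finishes with equivariance of the orthogonal projection $\pi$; this avoids the spanning step and is a clean Schur-type argument, and you correctly flag the delicate identification of the $\rho$-action on $\C^n$ with the restricted $\tilde{W}$-action, which is exactly what the discussion after Lemma \ref{lem1} together with Lemma \ref{lem2} provides. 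What your approach needs, and the paper's does not, is the $G$-invariance of the $L^2$ inner product $(\cdot,\cdot)_{\tilde{W}}$, i.e.\ that the natural $G$-action on $\tilde{L}=G\times_K\tilde{L}_0$ preserves the given fiber metric $h_{\tilde{L}}$; this is not automatic for an arbitrary Hermitian metric on a homogeneous bundle and should be justified explicitly: it follows because $\phi$ preserves fiber metrics, $\phi$ is $G$-equivariant (Lemma \ref{lem2}), and the $G$-action on $f^*Q\subset\underline{\C^n}$ is through the unitaries $\rho(g)$. Finally, your displayed chain of equalities is garbled (the middle terms do not follow from (\ref{4}) as written); the intended computation is
\begin{equation*}
(\underline{T}^2\rho(g)u,\rho(g)v)_n=(\rho(g)u,\rho(g)v)_{\tilde{W}}=(u,v)_{\tilde{W}}=(\underline{T}^2u,v)_n=(\rho(g)\underline{T}^2u,\rho(g)v)_n,
\end{equation*}
valid for all $u,v\in\C^n$, which yields $\rho(g)^{-1}\underline{T}^2\rho(g)=\underline{T}^2$ as you claim; with that repaired and the invariance of $(\cdot,\cdot)_{\tilde{W}}$ supplied, your proof goes through.
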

	\begin{proof}
		Since $\phi:\tilde{L}\longrightarrow f^*Q$ is $G$-equivariant, it follows from (\ref{2}) and (\ref{11}) that we 
		can calculate
			\begin{align*}
			& \phi(g_1\cdot[g_2,v])=\phi([g_1g_2,v])=([g_1g_2],Tg_1g_2v), \\
			& \phi(g_1\cdot[g_2,v])=g_1\cdot\phi([g_2,v])=g_1\cdot([g_2],Tg_2v)=([g_1g_2],g_1Tg_2v).
			\end{align*}
		Therefore we have
			\begin{equation*}
			Tgv = gTv,\qquad {\rm for}\ g\in G, v\in \tilde{L}_0.
			\end{equation*}
		We denote by $GL_0$ an subspace of $W$ spanned by $gv$ for any $g\in G$ and $v\in L_0$ and similarly we denote 
		by $G\tilde{L}_0$.
		Then $G\tilde{L}_0$ is regarded as $q$-orthogonal direct sum of $GL_0$.
		$GL_0$ is a $G$-representation subspace of $W$. 
		Since $W$ is irreducible and $GL_0$ is not empty, we obtain $W= GL_0$.
		Consequently we obtain $\tilde{W} = G\tilde{L}_0$.
		It follows that for any $w\in \tilde{W}$ there exists $\alpha_i \in \C$, $g_i\in G$ and $v_i\in\tilde{L}_0$ such that $w=\sum \alpha_ig_iv_i$, where the right hand side of this equation is a finite sum.
		For any $g\in G$, we have
			\begin{equation*}
			Tgw=Tg\sum \alpha_ig_iv_i = \sum Tg\alpha_ig_iv_i = \sum gT\alpha_ig_iv_i = gT\sum \alpha_ig_iv_i = gTw.
			\end{equation*}
		Therefore $T$ is $G$-equivariant.
	\end{proof}
	
	Since $T:\tilde{W}\longrightarrow \tilde{W}$ is $G$-equivariant, $T$ is also $K$-eqivariant.
	
	\begin{lemma}
	\label{lem4}
			\begin{equation*}
			T(\tilde{L}_0)\subset\tilde{L}_0.
			\end{equation*}
	\end{lemma}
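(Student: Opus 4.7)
The plan is to exploit the fact that, by the Bott-Borel-Weil theorem invoked in the Preliminaries, the section space $W$ is an irreducible $G$-representation, so that $\tilde{W}=W\oplus\cdots\oplus W$ ($q$ copies) is isotypic. Since Lemma \ref{lem3} gives that $T$ is $G$-equivariant, Schur's lemma then forces $T$ to act by a $q\times q$ scalar matrix with respect to the decomposition $\tilde{W}=W^{\oplus q}$, and this rigidity immediately implies the desired inclusion $T(\tilde{L}_0)\subset\tilde{L}_0$.

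More precisely, using the projections $\pi_j:\tilde{W}\to W$ onto the $j$-th summand introduced in Section 3, together with the corresponding inclusions $\iota_j:W\hookrightarrow \tilde{W}$, I would form the components $T_{ij}:=\pi_i\circ T\circ \iota_j$. Each $T_{ij}$ is a $G$-equivariant endomorphism of the irreducible representation $W$, so by Schur's lemma $T_{ij}=a_{ij}\,\mathrm{Id}_W$ for some constants $a_{ij}\in\C$. Thus for any $v=v_1\oplus\cdots\oplus v_q\in \tilde{W}$,
$$T(v)=\Big(\sum_{j=1}^q a_{1j}v_j\Big)\oplus\cdots\oplus\Big(\sum_{j=1}^q a_{qj}v_j\Big).$$

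Finally, take $v\in\tilde{L}_0$, so each $v_j\in L_0$. Since $L_0$ is a linear subspace of $W$, each component $\sum_j a_{ij}v_j$ still lies in $L_0$, and hence $T(v)\in L_0\oplus\cdots\oplus L_0=\tilde{L}_0$, which is the claim. There is no serious technical obstacle here; the essential step is the Schur reduction, and the only delicate point is making sure to invoke irreducibility of $W$ correctly and to remember that $\tilde{L}_0$ sits inside $\tilde{W}$ as $L_0$ placed block-diagonally in every coordinate, so that the matrix of scalars $(a_{ij})$ automatically preserves it.
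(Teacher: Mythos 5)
Your proof is correct, but it takes a different route from the paper's. You apply Schur's lemma at the level of $G$: since Lemma \ref{lem3} makes $T$ $G$-equivariant and each summand of $\tilde{W}=W\oplus\cdots\oplus W$ is the same irreducible $G$-module (the decomposition is $G$-invariant because each line-bundle summand of $\tilde{L}$ is a homogeneous subbundle), every component $T_{ij}=\pi_i\circ T\circ\iota_j$ is a scalar $a_{ij}\,\mathrm{Id}_W$, and the block-diagonal placement of $\tilde{L}_0=L_0\oplus\cdots\oplus L_0$ then gives the inclusion at once. The paper instead uses only the $K$-equivariance of $T$ (noted right after Lemma \ref{lem3}): it observes that $\pi_j\circ T(\tilde{L}_0)$ is a $K$-subrepresentation of $W$ built out of copies of $L_0$, and invokes Schur's lemma together with Borel--Weil theory to conclude that the $L_0$-isotypic part of $W$ as a $K$-module is $L_0$ itself, whence $\pi_j\circ T(\tilde{L}_0)\subset L_0$. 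So the key input differs: you use $G$-irreducibility of $W$, the paper uses the multiplicity-one property of the $K$-type $L_0$ inside $W$. Your version is stronger --- it shows $T$ acts as a $q\times q$ matrix of scalars tensored with $\mathrm{Id}_W$, which would also streamline the later proof of Theorem \ref{thm2}, where the paper separately re-invokes $G$-irreducibility to pass from $T^2=\mathrm{Id}$ on $\tilde{L}_0$ to all of $\tilde{W}$ --- while the paper's argument is more economical in that it needs only the $K$-equivariance of $T$ at this point.
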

	\begin{proof}
		Since the orthogonal projection $\pi_j:\tilde{W}\longrightarrow W$ is $K$-equivariant for each $j=1,\cdots,q$, $
		\pi_j\circ T:\tilde{W}\longrightarrow W$ is a $K$-equivariant endomorphism.
		Thus $\pi_j\circ T(\tilde{L}_0)\subset W$ is a $K$-representation subspace of $W$.
		It follows from Schur's lemma and Borel-Weil theory that $\pi_j\circ T(\tilde{L}_0)\subset L_0$.
		Concequently $T(\tilde{L}_0)\subset (\tilde{L}_0)$.
	\end{proof}

	We denote by the same notation $T:\tilde{L}_0\longrightarrow \tilde{L}_0$ the restriction of $T:\tilde{W}
	\longrightarrow \tilde{W}$ to $\tilde{L}_0$.

	\begin{theorem}
	\label{thm2}
		The endomorphism $T:\tilde{W}\longrightarrow \tilde{W}$ is the identity map.
	\end{theorem}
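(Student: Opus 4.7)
The plan is to combine Lemma 4 and Schur's lemma to describe $T|_{\tilde{L}_0}$ as a $q\times q$ matrix, then use Lemma 2 together with the Hermitian hypothesis on $\phi$ to show that this matrix is simultaneously positive semi-definite Hermitian and unitary, hence the identity. Finally, $G$-equivariance of $T$ and irreducibility of $W$ propagate this to all of $\tilde{W}$.

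First, by Lemma 4, $T$ restricts to an endomorphism $T|_{\tilde{L}_0}:\tilde{L}_0\longrightarrow \tilde{L}_0$, and by Lemma 3 this restriction is $K$-equivariant. Because $L_0$ is a $1$-dimensional $K$-representation, Schur's lemma identifies $K$-equivariant endomorphisms of $\tilde{L}_0 = L_0\oplus\cdots\oplus L_0$ with $q\times q$ complex matrices acting on the multiplicity factor $\C^q$; thus $T|_{\tilde{L}_0}$ is given by some matrix $A\in {\rm End}(\C^q)$. Semi-positive Hermiticity of $T$ on $\tilde{W}$, with respect to the natural inner product on the multiplicity factor induced by the decomposition $\tilde{W}=W\oplus\cdots\oplus W$, implies that $A$ is also semi-positive Hermitian.

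Next I would identify $A$ via Lemma 2. Combining $\phi([g,v])=([g], Tgv)$ with the $G$-equivariance of $T$ (Lemma 3) gives $\phi([g,v])=([g], gTv)$ for $v\in\tilde{L}_0$, so the matrix of constants $(\alpha_{ij})$ produced in the proof of Lemma 2 describes exactly the restriction of $T$ to $\tilde{L}_0$; in particular $A=(\alpha_{ij})$. Because $\phi$ is an isomorphism of Hermitian holomorphic bundles (the definition of strongly projectively flat), and the homogeneous identification $f^*Q\cong \tilde{L}$ used in Lemma 2 can be arranged to preserve fiber metrics at $[e]$---possible since $L_0$ is $1$-dimensional $K$-irreducible---the norm-preservation and mutual orthogonality of the components $\phi_j$ at $[e]$ force $(\alpha_{ij})$ to be unitary.

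A Hermitian, positive semi-definite, unitary matrix has eigenvalues lying in $[0,\infty)\cap S^1 = \{1\}$, so $A={\rm Id}_{\C^q}$ and $T|_{\tilde{L}_0}$ is the identity. Since $W$ is an irreducible $G$-representation and $L_0\subset W$ is nonzero, $GL_0$ spans $W$, so $G\tilde{L}_0$ spans $\tilde{W}$---exactly the argument used at the end of the proof of Lemma 3. Combined with the $G$-equivariance of $T$, this yields $T={\rm Id}_{\tilde{W}}$, as required. The subtlest step is verifying that $(\alpha_{ij})$ is unitary rather than merely a scalar multiple of a unitary: this requires a consistent treatment of the $L^2$ inner product on $\tilde{W}$, the fiber metric on $\tilde{L}$, and the pullback of $(\cdot,\cdot)_n$ under $\phi$, and is the main place where the Hermitian (as opposed to merely holomorphic) isomorphism hypothesis is used.
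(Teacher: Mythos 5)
Your argument is correct and, at its core, the same as the paper's: both use that the fiber-metric-preserving isomorphism $\phi$ forces $T|_{\tilde{L}_0}$ to be an isometry, so that semi-positive Hermitian plus isometric gives the identity on $\tilde{L}_0$, and then $G$-equivariance of $T$ together with irreducibility of $W$ (via $\tilde{W}=G\tilde{L}_0$) spreads this to all of $\tilde{W}$. The paper simply skips your Schur/matrix detour through the constants $(\alpha_{ij})$ and computes directly $(v_1,v_2)_{\tilde{L}_0}=(Tv_1,Tv_2)_{\tilde{L}_0}=(T^2v_1,v_2)_{\tilde{L}_0}$, giving $T^2=\mathrm{Id}$ on $\tilde{L}_0$; this also settles the scalar ambiguity you flag, since the definition of strong projective flatness requires $\phi$ to preserve the fiber metrics exactly, not merely up to a constant.
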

	\begin{proof}
		Since the bundle isomorphism $\phi:\tilde{L}\longrightarrow f^*Q$ preserves fiber metrics and $T$ is Hermitian, 
		we have
			\begin{equation*}
			(v_1,v_2)_{\tilde{L}_0}=([e,v_1],[e,v_2])_{\tilde{L}}=([e,Tv_1],[e,Tv_2])_{\tilde{L}}=(Tv_1,Tv_2)_{\tilde{L}
			_0}=(T^2v_1,v_2)_{\tilde{L}_0},
			\end{equation*}
		for any $v_1, v_2\in \tilde{L}_0$.
		Therefore $T^2:\tilde{L}_0\longrightarrow \tilde{L}_0$ is the identity map.
		Since $W$ is $G$-irreducible and $T$ is $G$-equivariant, $T^2:\tilde{W}\longrightarrow \tilde{W}$ is the 
		identity map and so is $T$ because $T$ is semi-positive Hermitian.
	\end{proof}
	
	Consequently, a holomorphic strongly projectively flat $G$-equivariant map $f:M\longrightarrow Gr_p(\C^n)$ is the 
	standard map induced by a $q$-orthogonal direct sum bundle of Hermitian line bundle $L\rightarrow M$.


\begin{thebibliography}{99}
\bibitem{Calabi1}
\textsc{E. Calabi},
Isometric imbeddings of complex manifolds,
Ann. of Math.,
\textbf{58} (1953), 1--23.

\bibitem{K1}
\textsc{S. Kobayashi},
Differential geometry of Complex Vector Bundles,
Iwanami Shoten and Princeton University, Tokyo\ (1987).

\bibitem{K2}
\textsc{---------------},
On compact K\"{a}hler manifolds with positive definite Ricci tensor,
Ann. of Math.(2), 
\textbf{74} (1961), 570--574.

\bibitem{Koga1}
\textsc{I. Koga},
Projectively flat immersions of Hermitian symmetric spaces of compact type,
accepted.

\bibitem{K-N}
\textsc{I. Koga, Y. Nagatomo},
A Study of Submanifolds of the Complex Grassmannian
Manifold with Parallel Second Fundamental Form,
accepted.

\bibitem{N1}
\textsc{Y. Nagatomo},
Harmonic maps into Grassmannian manifolds, 
a preprint.


\bibitem{Takeuchi}
\textsc{M. Takeuchi}
Homogeneous K\"ahler submanifolds in complex projective spaces,
Japan. J. Math.
\textbf{4} (1977), 171--219.


\end{thebibliography}
\end{document}